\newtheorem{theorem}{Theorem}
\theoremstyle{remark}
\numberwithin{equation}{section}
\author{Michael J.\ Schlosser}
\address{Fakult\"at f\"ur Mathematik, Universit\"at Wien,
Oskar-Morgenstern-Platz~1, A-1090 Vienna, Austria}
\email{michael.schlosser@univie.ac.at}
\urladdr{http://www.mat.univie.ac.at/{\textasciitilde}schlosse}
\thanks{Partly supported by FWF Austrian Science Fund
grant F50-08.}
\title[$q$-Analogues of two product formulas of hypergeometric functions]
{$q$-Analogues of two product formulas of hypergeometric functions by Bailey}
\subjclass[2010]{Primary 33D15}
\keywords{basic hypergeometric series, product formulas}
\begin{document}

\begin{abstract}
We use Andrews' $q$-analogues of Watson's and Whipple's $_3F_2$
summation theorems to deduce two formulas for products of specific
basic hypergeometric functions. These constitute $q$-analogues of
corresponding product formulas for ordinary hypergeometric functions
given by Bailey. The first formula was obtained earlier by Jain and
Srivastava by a different method.
\end{abstract}

\dedicatory{Dedicated to Mourad E.H.\ Ismail}

\maketitle

\section{Introduction}\label{secintro}
We refer to Slater's text \cite{Sl} for an introduction to
hypergeometric series, and to Gasper and Rahman's text \cite{GR}
for an introduction to basic hypergeometric series, whose notations we follow.
Throughout, we assume $|q|<1$ and $|z|<1$.

In \cite{A}, George Andrews proved the following two theorems:
\begin{theorem}\label{qwatson}
\begin{equation}
{}_4\phi_3\!\left[\begin{matrix}a,b,c^{\frac 12},-c^{\frac 12}\\
(abq)^{\frac 12},-(abq)^{\frac 12},c
\end{matrix}\,;q,q\right]=a^{\frac n2}
\frac{(aq,bq,cq/a,cq/b;q^2)_\infty}
{(q,abq,cq,cq/ab;q^2)_\infty},
\end{equation}
where $b=q^{-n}$ and $n$ is a nonnegative integer.
\end{theorem}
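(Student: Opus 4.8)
The plan is to exploit the two ``square-root'' pairs in the series. First I would collapse them using $(c^{1/2};q)_k(-c^{1/2};q)_k=(c;q^2)_k$ and $((abq)^{1/2};q)_k(-(abq)^{1/2};q)_k=(abq;q^2)_k$, which rewrites the left-hand side as the single terminating sum
\begin{equation*}
\sum_{k=0}^{n}\frac{(a;q)_k\,(q^{-n};q)_k\,(c;q^2)_k}{(q;q)_k\,(abq;q^2)_k\,(c;q)_k}\,q^k,
\end{equation*}
a balanced (Saalsch\"utzian) ${}_4\phi_3$, as one checks from $(\text{upper})=a\,b\,c^{1/2}(-c^{1/2})=-abc$ and $(\text{lower})=(abq)^{1/2}(-(abq)^{1/2})c=-abqc=q\cdot(-abc)$. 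The mixing of the bases $q$ and $q^2$ already present in the summand, together with the shape of the right-hand side, guides the whole argument.

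The decisive observation is the form of the claimed answer: a ratio of four $q^2$-shifted factorials over four, multiplied by the monomial $a^{n/2}$. A four-over-four product in base $q^2$ is exactly what the terminating very-well-poised $q^2$-summations (the $q^2$-analogues of Dougall's ${}_6\phi_5$ and Jackson's ${}_8\phi_7$ sums) return, while the solitary factor $a^{n/2}$ is the kind of normalizing monomial produced by a base-doubling (quadratic) transformation. Accordingly I would pass from the base-$q$ series above to a very-well-poised series in base $q^2$ by means of a quadratic transformation: the pair $c^{1/2},-c^{1/2}$ over $c$ is precisely the structural feature such transformations demand, and it is what forces the base to double from $q$ to $q^2$. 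Equivalently, one may first apply Watson's $q$-transformation to realize the balanced ${}_4\phi_3$ as a very-well-poised ${}_8\phi_7$ in base $q$ in which two numerator parameters become negatives of one another, and then invoke the quadratic transformation available for such a ${}_8\phi_7$.

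Once the sum is brought into very-well-poised form in base $q^2$ and the terminating condition $b=q^{-n}$ is imposed, the resulting series should be $2$-balanced, hence summed in closed form by the appropriate $q^2$-Dougall/Jackson summation. Re-expressing the four resulting shifted-factorial ratios, and absorbing the prefactor coming from the quadratic transformation, I expect to recover exactly $a^{n/2}(aq,bq,cq/a,cq/b;q^2)_\infty/(q,abq,cq,cq/ab;q^2)_\infty$ after writing the finite $q^2$-products as quotients of infinite ones via $b=q^{-n}$.

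The main obstacle is the quadratic transformation itself and the attendant bookkeeping: one must match parameters so that the image is genuinely very-well-poised and $2$-balanced (not merely well-poised), keep careful track of the base change $q\mapsto q^2$ through every shifted factorial, and verify that the normalizing monomial produced is exactly $a^{n/2}$ and not some other power. Confirming the $2$-balanced condition in the new base --- equivalently, that the transformed argument equals $q^2$, so that a summation rather than merely a transformation applies --- is the crux; everything after it is routine simplification.
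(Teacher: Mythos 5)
First, note that the paper itself contains no proof of Theorem~\ref{qwatson}: the result is quoted verbatim from Andrews \cite{A}, so your proposal can only be measured against the literature, not against an in-paper argument. Your preparatory steps are correct: the collapsing $(x;q)_k(-x;q)_k=(x^2;q^2)_k$ of the two $\pm$ pairs and the check that the ${}_4\phi_3$ is balanced are both right. But the engine of your proof --- the quadratic transformation --- is never identified, and you yourself concede that its decisive property (a very-well-poised image summable by a $q^2$-Dougall or $q^2$-Jackson formula) is unverified; that concession is exactly where the proof lives. Moreover, your shape heuristic does not select the tool: the $q$-Saalsch\"utz summation also returns a four-over-four ratio once its finite products are rewritten as quotients of infinite products using $b=q^{-n}$, and a Saalsch\"utzian, not a very-well-poised, target is where the standard route actually lands. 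Indeed, Singh's quadratic transformation \cite[Eq.~(3.10.13)]{GR} applies verbatim (take its parameters $(a^2,b^2,c,d)$ to be $(a,b,c^{1/2},-c^{1/2})$, so that $abq^{1/2}\mapsto(abq)^{1/2}$ and $-cd\mapsto c$) and turns your rewritten sum into
\begin{equation*}
{}_3\phi_2\!\left[\begin{matrix}a,\,b,\,c\\ abq,\,cq\end{matrix}\,;q^2,q^2\right],
\end{equation*}
balanced in base $q^2$; for even $n=2m$ the $q$-Saalsch\"utz formula \cite[Eq.~(1.7.2)]{GR} in base $q^2$ evaluates this to $(q;q^2)_m(cq/a;q^2)_m\big/\bigl((q/a;q^2)_m(cq;q^2)_m\bigr)$, and a routine computation confirms that this equals the stated right-hand side, monomial $a^{n/2}$ included.

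The genuine gap is the parity of $n$, which your plan never mentions. The terminating parameter satisfies $q^{-n}=(q^2)^{-n/2}$ only for even $n$: when $n$ is odd the base-$q^2$ series above does not terminate, Singh's transformation as usually stated (both series terminating) does not apply, and no terminating $q^2$-summation --- Saalsch\"utz, Dougall or Jackson --- can be invoked as such. This case is not vacuous: for odd $n$ the right-hand side of the theorem vanishes identically, since $(bq;q^2)_\infty=(q^{1-n};q^2)_\infty$ contains the factor $1-q^0=0$, so the theorem then asserts the vanishing of a terminating sum, and that must be argued separately --- for instance via a nonterminating extension of the quadratic transformation (by analytic continuation) combined with the nonterminating $q$-Saalsch\"utz formula of \cite[Sec.~2.10]{GR}, or via a transformation in which termination enters through $q^{-2n}=(q^2)^{-n}$. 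Your alternative ${}_8\phi_7$ route is in fact promising on this point: applying Watson's transformation \cite[Eq.~(2.5.1)]{GR} to the balanced ${}_4\phi_3$ forces the very-well-poised special parameter to be $-q^{-n}$, whose square $q^{-2n}$ terminates in base $q^2$ for all $n$ --- but you would still have to produce the specific quadratic transformation and verify summability of its image. Until the transformation is pinned down and the odd-$n$ degeneration is handled, what you have is a plausible outline consistent with known derivations, not a proof.
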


\begin{theorem}\label{qwhipple}
\begin{equation}\label{qwhippleeq}
{}_4\phi_3\!\left[\begin{matrix}a,q/a,c^{\frac 12},-c^{\frac 12}\\
-q,e,cq/e\end{matrix}\,;q,q\right]=q^{\binom{n+1}2}
\frac{(ea,eq/a,caq/e,cq^2/ae;q^2)_\infty}
{(e,cq/e;q)_\infty},
\end{equation}
where $a=q^{-n}$ and $n$ is a nonnegative integer.
\end{theorem}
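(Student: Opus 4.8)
The plan is to prove the summation \eqref{qwhippleeq} by exploiting the paired numerator parameters $c^{\frac12},-c^{\frac12}$ together with the denominator factor $-q$: this combination signals that a quadratic transformation sending the base $q$ to the base $q^2$ is the natural tool, a guess already supported by the fact that the right-hand side is a ratio of $q^2$-shifted factorials. Since $a=q^{-n}$ the ${}_4\phi_3$ terminates, and using the elementary identities
\[
(c^{\frac12};q)_k\,(-c^{\frac12};q)_k=(c;q^2)_k,\qquad (q;q)_k\,(-q;q)_k=(q^2;q^2)_k,
\]
I would first rewrite the left-hand side of \eqref{qwhippleeq} as the mixed-base sum
\[
\sum_{k=0}^{n}\frac{(a;q)_k\,(q/a;q)_k\,(c;q^2)_k}{(q^2;q^2)_k\,(e;q)_k\,(cq/e;q)_k}\,q^k .
\]

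Next I would feed this into a quadratic transformation of the Singh/Jain type (as collected in Chapter~3 of \cite{GR}), making use of the well-poised pairing $a\cdot(q/a)=q$ in the numerator and the pairing $e\cdot(cq/e)=cq$ in the denominator. The effect should be to collapse the mixed-base sum into a \emph{single-base} terminating series in $q^2$ whose parameters are balanced; such a series is then summable in closed form by the $q^2$-analogue of the Pfaff--Saalsch\"utz (or Gauss) summation. Evaluating it, converting the resulting finite $q^2$-shifted factorials back into the infinite products $(ea,eq/a,caq/e,cq^2/ae;q^2)_\infty/(e,cq/e;q)_\infty$, and tracking the overall power of $q$ should reproduce the prefactor $q^{\binom{n+1}2}$ and complete the proof.

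I expect the main obstacle to be two-fold. First, one must pin down precisely which quadratic transformation has its hypotheses met here and verify that the free parameters $e$ and $cq/e$ slot in so that the transformed $q^2$-series is genuinely balanced, and hence summable; a naive attempt via Watson's ${}_8\phi_7\to{}_4\phi_3$ transformation does not directly apply, since the companion very-well-poised ${}_8\phi_7$ turns out not to be balanced and so is not summable by Jackson's theorem. Second, the passage between base $q$ and base $q^2$ forces careful bookkeeping of the parity of $n$ when rewriting $(x;q)_\infty$ as $(x;q^2)_\infty(xq;q^2)_\infty$ and cancelling against the terminating factors; that algebra, while routine, is where sign and power-of-$q$ slips are easiest to make. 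As a sanity check I would confirm the identity directly for $n=0$ and $n=1$ before committing to the general computation.
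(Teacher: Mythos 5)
The paper itself contains no proof of Theorem~\ref{qwhipple}: it is quoted verbatim from Andrews \cite{A}, and the note in fact runs the logic in the other direction (Theorem~\ref{qwhippleprod} is \emph{derived from} this summation by coefficient comparison). So your attempt can only be judged on its own terms, and on those terms it has a genuine gap at its center. The parts you do carry out are sound: with $a=q^{-n}$ the series terminates at $k=n$, the identities $(c^{\frac12};q)_k(-c^{\frac12};q)_k=(c;q^2)_k$ and $(q;q)_k(-q;q)_k=(q^2;q^2)_k$ correctly produce the mixed-base form $\sum_{k=0}^{n}(a,q/a;q)_k\,(c;q^2)_k\,q^k/\bigl((q^2;q^2)_k\,(e,cq/e;q)_k\bigr)$, and your diagnosis of the Watson route is accurate --- the ${}_4\phi_3$ \emph{is} balanced (numerator product $-cq$, denominator product $-cq^2=q\cdot(-cq)$), so Watson's transformation does yield a companion very-well-poised ${}_8\phi_7$, but for generic $e$ that ${}_8\phi_7$ violates Jackson's balancing condition and is not summable. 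The problem is that the step which would actually prove \eqref{qwhippleeq} --- ``feed this into a quadratic transformation of the Singh/Jain type,'' after which the sum ``should'' collapse to a balanced terminating $q^2$-series summable by $q$-Saalsch\"utz --- is never instantiated. You do not name the transformation, do not verify that its hypotheses are met by the pairings $a\cdot(q/a)=q$ and $e\cdot(cq/e)=cq$, and you explicitly list identifying it as an open obstacle. Since the claimed balancedness of the transformed series, the closed-form evaluation, the reconstruction of the products $(ea,eq/a,caq/e,cq^2/ae;q^2)_\infty/(e,cq/e;q)_\infty$, and the prefactor $q^{\binom{n+1}{2}}$ are all conditional on that unexhibited lemma, what you have is a plausible research plan, not a proof; verifying $n=0,1$ cannot substitute for the missing step.

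The plan itself is not a wrong turn: quadratic transformations connecting exactly this mixed-base shape to single-base $q^2$-series do exist (Singh's and Jain's transformations, collected in \cite[Sec.~3.10 and the exercises to Ch.~3]{GR}; the Jain--Srivastava paper \cite{JS} cited here operates at precisely this level of generality), and Andrews' own derivation in \cite{A} provides an independent route you could simply follow or adapt. To complete your argument you must exhibit the specific transformation, check that the resulting $q^2$-series is genuinely balanced, and then do the parity bookkeeping you flag --- and note the paper's remark that the identity is subsequently \emph{used} in the form where $a$ is free and $c=q^{-2n}$ (obtained by a polynomial argument), so you should decide at the outset which terminating case you are proving, since the $(x;q)_\infty=(x;q^2)_\infty(xq;q^2)_\infty$ cancellations differ between the two.
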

By a standard polynomial argument \eqref{qwhippleeq} also holds
when $a$ is a complex variable but $c=q^{-2n}$ with
$n$ being a nonnegative integer. (This is the case we will
make use of.)

Theorems~\ref{qwatson} and \ref{qwhipple} are $q$-analogues of Watson's
and of Whipple's $_3F_2$ summation theorems, listed as
Equations (III.23) and (III.24) in \cite[p.~245]{Sl}, respectively.

\section{Two product formulas for basic hypergeometric
functions}

We now have the following two product formulas which
are derived using Theorems~\ref{qwatson} and \ref{qwhipple}.
The first one in Theorem~3 was already given earlier by
Jain and Srivastava~\cite[Equation~(4.9)]{JS}
(as Slobodan Damjanovi\'c has kindly pointed out to the author,
after seeing an earlier version of this note),
who established the result by specializing a
general reduction formula for double basic hypergeometric series.
The second formula in Theorem~4 appears to be new.
\begin{theorem}\label{qwatsonprod}
\begin{equation}
{}_2\phi_1\!\left[\begin{matrix}a,-a\\
a^2\end{matrix}\,;q,z\right]
{}_2\phi_1\!\left[\begin{matrix}b,-b\\
b^2\end{matrix}\,;q,-z\right]
={}_4\phi_3\!\left[\begin{matrix}ab,-ab,abq,-abq\\
a^2q,b^2q,a^2b^2\end{matrix};q^2,z^2\right].
\end{equation}
\end{theorem}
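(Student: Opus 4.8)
The plan is to expand the left-hand side as a Cauchy product of the two ${}_2\phi_1$-series and to read off the coefficient of $z^N$, namely
\[
c_N=\sum_{n=0}^{N}\frac{(a;q)_{N-n}(-a;q)_{N-n}}{(a^2;q)_{N-n}(q;q)_{N-n}}\,
\frac{(b;q)_n(-b;q)_n}{(b^2;q)_n(q;q)_n}\,(-1)^n.
\]
Since the right-hand side is a power series in $z^2$, the assertion splits into two claims: that $c_N=0$ for odd $N$, and that $c_{2k}$ equals the coefficient of $z^{2k}$ in the ${}_4\phi_3$, which is
\[
r_k=\frac{(ab;q^2)_k(-ab;q^2)_k(abq;q^2)_k(-abq;q^2)_k}
{(a^2q;q^2)_k(b^2q;q^2)_k(a^2b^2;q^2)_k(q^2;q^2)_k}.
\]

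Next I would reverse the order of summation in the factors indexed by $N-n$. Applying the standard identity
\[
(x;q)_{N-n}=\frac{(x;q)_N}{(q^{1-N}/x;q)_n}\Big(-\frac{q^{1-N}}{x}\Big)^n q^{\binom n2}
\]
to each of $(a;q)_{N-n}$, $(-a;q)_{N-n}$, $(a^2;q)_{N-n}$ and $(q;q)_{N-n}$, the four factors $q^{\binom n2}$ cancel, the powers of $a$ cancel, and the surviving monomial is $(-1)^n q^n$, which absorbs the $(-1)^n$ already present. This recasts $c_N$ as a prefactor times a terminating series:
\[
c_N=\frac{(a;q)_N(-a;q)_N}{(a^2;q)_N(q;q)_N}\,
{}_4\phi_3\!\left[\begin{matrix}q^{-N},\,a^{-2}q^{1-N},\,b,\,-b\\
a^{-1}q^{1-N},\,-a^{-1}q^{1-N},\,b^2\end{matrix};q,q\right].
\]

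The key observation is that this ${}_4\phi_3$ has exactly the shape summed by Theorem~\ref{qwatson}: the pair $b,-b$ plays the role of $c^{\frac12},-c^{\frac12}$ with $c=b^2$, while $a^{-1}q^{1-N},-a^{-1}q^{1-N}$ are $\pm(abq)^{\frac12}$. I would therefore invoke Andrews' $q$-Watson summation with the substitutions $a\mapsto a^{-2}q^{1-N}$, $b\mapsto q^{-N}$, $c\mapsto b^2$ and $n\mapsto N$. The resulting closed form carries a factor $(q^{1-N};q^2)_\infty$ in its numerator; for odd $N$ this contains the term $1-q^0=0$, so $c_N=0$, whereas all denominator factors are nonzero for generic $a,b$. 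This settles the odd case at once.

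The remaining work---and where I expect the real friction---is the even case $N=2k$: one must check that the prefactor $\frac{(a;q)_{2k}(-a;q)_{2k}}{(a^2;q)_{2k}(q;q)_{2k}}$ times the value produced by Theorem~\ref{qwatson} collapses to $r_k$. This is pure Pochhammer bookkeeping: the infinite $q^2$-products telescope pairwise into finite symbols (each quotient of the form $(x;q^2)_\infty/(xq^{2k};q^2)_\infty=(x;q^2)_k$ truncates because of the $q^{1-2k}$- and $q^{2k}$-shifts), the monomial prefactor $(a^{-2}q^{1-2k})^{k}$ supplies the needed powers of $a$ and $q$, and the base-$q$ symbols are split via $(x;q)_{2k}=(x;q^2)_k(xq;q^2)_k$; reversing the symbols with negative exponents via $1-q^{-m}=-q^{-m}(1-q^m)$ accounts for the stray signs and $q$-powers. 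Matching term by term yields $c_{2k}=r_k$. Throughout one argues for generic $a,b$ and extends by continuity; since both sides are convergent power series in $z$ for $|z|<1$, equality of all coefficients gives the theorem.
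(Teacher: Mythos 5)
Your proposal is correct and follows exactly the route the paper sketches: compare coefficients of $z^N$ in the Cauchy product, reverse the inner sum to expose a terminating ${}_4\phi_3$ of Andrews' $q$-Watson type (Theorem~\ref{qwatson} with $a\mapsto a^{-2}q^{1-N}$, $b\mapsto q^{-N}$, $c\mapsto b^2$), whose closed form vanishes for odd $N$ via the factor $(q^{1-N};q^2)_\infty$ and reduces to $r_k$ for $N=2k$. You have simply supplied the ``routine'' $q$-shifted-factorial manipulations that the paper's proof omits, and your bookkeeping (including the monomial $(-1)^nq^n$ and the telescoping of the $q^2$-products) checks out.
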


\begin{theorem}\label{qwhippleprod}
\begin{subequations}\label{qwhippleprodid}
\begin{align}
{}_2\phi_1\!\left[\begin{matrix}a,q/a\\
-q\end{matrix}\,;q,z\right]
{}_2\phi_1\!\left[\begin{matrix}b,q/b\\
-q\end{matrix}\,;q,-z\right]
=\sum_{j=0}^\infty\frac{(q^{2-j}/ab,aq^{1-j}/b;q^2)_j}
{(q^2;q^2)_j}q^{\binom j2}(bz)^j&\\
={}_4\phi_3\!\left[\begin{matrix}ab,q^2/ab,aq/b,bq/a\\
-q^2,q,-q\end{matrix}\,;q^2,z^2\right]
\qquad\qquad\qquad\qquad\qquad\quad&\notag\\[.1em]
{}-\frac{(a-b)(1-q/ab)}{1-q^2}\,z\,
{}_4\phi_3\!\left[\begin{matrix}abq,q^3/ab,aq^2/b,bq^2/a\\
-q^2,q^3,-q^3\end{matrix}\,;q^2,z^2\right]&.
\end{align}
\end{subequations}
\end{theorem}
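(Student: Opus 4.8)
The plan is to derive the two equalities in \eqref{qwhippleprodid} by two different mechanisms. For the first equality (product equals single sum) I would expand the product of the two ${}_2\phi_1$'s as a Cauchy product in $z$ and evaluate the coefficient of each power of $z$ by means of Theorem~\ref{qwhipple}; this is the same strategy by which Andrews' summations feed into such product formulas. The second equality (single sum equals a combination of two ${}_4\phi_3$'s) is then a matter of separating the single sum according to the parity of its summation index.

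First I would write the product as $\sum_{n\ge 0}c_n z^n$ with
\[
c_n=\sum_{k=0}^{n}(-1)^{k}\frac{(a,q/a;q)_{n-k}}{(-q,q;q)_{n-k}}\,\frac{(b,q/b;q)_{k}}{(-q,q;q)_{k}}.
\]
The factor $(b,q/b;q)_{k}/[(-q,q;q)_{k}]$ already supplies two numerator entries and the two denominator entries of the ${}_4\phi_3$ in Theorem~\ref{qwhipple}. To expose the rest I would reverse the $(n-k)$-indexed symbols via $(x;q)_{n-k}=\frac{(x;q)_n}{(q^{1-n}/x;q)_k}(-q/x)^{k}q^{\binom{k}{2}-nk}$ applied to $x\in\{a,q/a,-q,q\}$. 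After this reversal, $c_n$ becomes, up to an explicit $n$-dependent prefactor, precisely a terminating ${}_4\phi_3$ at argument $q$ of the shape summed by Theorem~\ref{qwhipple}, with the choice $a\mapsto b$, $e=q^{1-n}/a$, and $c=q^{-2n}$ (so $c^{1/2}=q^{-n}$); this is exactly the polynomial instance highlighted in the remark after Theorem~\ref{qwhipple}. Applying the summation and simplifying then collapses $c_n$ to $\frac{(q^{2-n}/ab,aq^{1-n}/b;q^2)_n}{(q^2;q^2)_n}q^{\binom n2}b^n$, which is the $z^n$-coefficient in the middle line of \eqref{qwhippleprodid} (with $n$ renamed $j$).

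I expect this step to be the main obstacle. Two things must be checked: that the four numerator and three denominator entries produced by the reversal line up with $b,q/b,q^{-n},-q^{-n}$ over $-q,q^{1-n}/a,aq^{-n}$ and that the accumulated powers of $q$ and signs combine cleanly; and, more delicately, that the closed form delivered by Theorem~\ref{qwhipple}, namely $q^{\binom{n+1}2}(bq^{1-n}/a,q^{2-n}/ab,abq^{-n},aq^{1-n}/b;q^2)_\infty/(q^{1-n}/a,aq^{-n};q)_\infty$, simplifies against the prefactor to the stated summand. This requires converting the infinite $q^2$-products with $q^{-n}$-type arguments into finite ones and cancelling the two "extra" factors against the prefactor $(a,q/a;q)_n/[(-q,q;q)_n]$ and the denominator, which is where the bookkeeping is least automatic.

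For the second equality I would split the single sum into its even part ($j=2i$) and odd part ($j=2i+1$). Using $(x;q^2)_{2i}=(x;q^2)_i(xq^{2i};q^2)_i$ on each of $(q^{2-j}/ab;q^2)_j$ and $(aq^{1-j}/b;q^2)_j$, and then converting the "negative'' halves of these symbols into $(ab;q^2)_i,(q^2/ab;q^2)_i,(aq/b;q^2)_i,(bq/a;q^2)_i$ (and their $q$-shifts for the odd part) by reflection, together with $(q^2;q^2)_{2i}=(q;q^2)_i(-q;q^2)_i(q^2;q^2)_i(-q^2;q^2)_i$, rewrites the even part as the first ${}_4\phi_3$ in \eqref{qwhippleprodid} and the odd part as $z$ times the second. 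A short computation shows the residual powers of $q$ and of $b$ cancel so that only $(z^2)^i$ survives, and that the $i=0$ term of the odd part produces exactly the prefactor $-(a-b)(1-q/ab)z/(1-q^2)$. This part is routine $q$-Pochhammer manipulation.
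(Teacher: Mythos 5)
Your proposal is correct and follows exactly the paper's own (sketched) route: comparing coefficients of $z^n$ and recognizing the resulting terminating ${}_4\phi_3$ as the polynomial case $c=q^{-2n}$ of Theorem~\ref{qwhipple} (your identification $a\mapsto b$, $e=q^{1-n}/a$, $c=q^{-2n}$ is the right one), then splitting the single sum over $j$ by parity to obtain the two ${}_4\phi_3$'s. You merely spell out the ``routine'' $q$-shifted-factorial bookkeeping that the paper omits, and those details check out.
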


\begin{proof}[Sketch of proofs.]
To prove Theorem~\ref{qwatsonprod}, compare coefficients
of $z^n$. The resulting identity is equivalent to Theorem~\ref{qwatson}.
The proof of Theorem~\ref{qwhippleprod} is similar.
Comparison of coefficients of $z^n$ gives an identity which is
equivalent to Theorem~\ref{qwhipple} (where in the latter theorem
the restriction $a=q^{-n}$ is replaced by $c=q^{-2n}$, as mentioned).
The second identity in Equation \eqref{qwhippleprodid} follows from
splitting the sum over $j$ into two parts depending on the parity of $j$.
(This is motivated by the particular numerator factors in the $j$-th summand.)
The technical details -- elementary manipulation of
$q$-shifted factorials -- are routine and thus omitted.
\end{proof}

Theorem~\ref{qwatsonprod} is a $q$-analogue of Bailey's
formula in \cite[p.~246, Equation~(2.11)]{B}:
\begin{equation}\label{qwatsonprodido}
{}_1F_1\!\left[\begin{matrix}a\\
2a\end{matrix}\,;z\right]
{}_1F_1\!\left[\begin{matrix}b\\
2b\end{matrix}\,;-z\right]
={}_2F_3\!\left[\begin{matrix}\frac 12(a+b),\frac 12(a+b+1)\\
a+\frac 12,b+\frac 12,a+b\end{matrix};\frac 14z\right].
\end{equation}
To obtain \eqref{qwatsonprodido} from Theorem~\ref{qwatsonprod},
replace $(a,b,z)$ by $(q^a,q^b,(1-q)z/2)$, and let $q\to 1$.

Similarly, Theorem~\ref{qwhippleprod} is a $q$-analogue of Bailey's
formula in \cite[p.~245, Equation~(2.08)]{B}:
\begin{align}\label{qwhippleprodido}
{}_2F_0\!&\left[\begin{matrix}a,1-a\\
-\end{matrix}\,;z\right]
{}_2F_0\!\left[\begin{matrix}b,1-b\\
-\end{matrix}\,;-z\right]\notag\\[.2em]
&{}={}_4F_1\!\left[\begin{matrix}\frac 12(1+a-b),\frac 12(1-a+b),
\frac 12(a+b),\frac 12(2-a-b)\\[.1em]
\frac 12\end{matrix}\,;4z^2\right]\notag\\[.1em]
&\qquad{}-(a-b)(a+b-1)\,z\notag\\
&\qquad\times{}_4F_1\!\left[\begin{matrix}\frac 12(2+a-b),\frac 12(2-a+b),
\frac 12(1+a+b),\frac 12(3-a-b)\\[.1em]
\frac 32\end{matrix}\,;4z^2\right].
\end{align}
To obtain \eqref{qwhippleprodido} from Theorem~\ref{qwhippleprod},
replace $(a,b,z)$ by $(q^a,q^b,2z/(1-q))$ and let $q\to 1$.

\section{Related results in the literature}

A different product formula for basic hypergeometric functions
was established by Srivastava \cite[Eq.~(21)]{S1} (see also
\cite[Eq.~(3.13)]{S2}):
\begin{equation}
{}_2\phi_1\!\left[\begin{matrix}a,b\\
-ab\end{matrix}\,;q,z\right]
{}_2\phi_1\!\left[\begin{matrix}a,b\\
-ab\end{matrix}\,;q,-z\right]
={}_4\phi_3\!\left[\begin{matrix}a^2,b^2,ab,abq\\
a^2b^2,-ab,-abq\end{matrix};q^2,z^2\right].
\end{equation}
This formula is a $q$-extension of Bailey's formula in
\cite[p.~245, Equation~(2.08)]{B}
(or, equivalently, of an identity recorded by
Ramanujan~\cite[Ch.\ 13, Entry 24]{R}).

Finally, we mention that in 1941 F.H.~Jackson~\cite{J}
had derived the identity
\begin{equation}
{}_2\phi_1\!\left[\begin{matrix}a^2,b^2\\
a^2b^2q\end{matrix}\,;q^2,z\right]
{}_2\phi_1\!\left[\begin{matrix}a^2,b^2\\
a^2b^2q\end{matrix}\,;q^2,qz\right]
={}_4\phi_3\!\left[\begin{matrix}a^2,b^2,ab,-ab\\
a^2b^2,abq^{\frac 12},-abq^{\frac 12}\end{matrix};q,z\right],
\end{equation}
which is a $q$-analogue of Clausen's formula of 1828,
\begin{equation}
\left({}_2F_1\!\left[\begin{matrix}a,b\\
a+b+\frac 12\end{matrix}\,;z\right]\right)^2
={}_3F_2\!\left[\begin{matrix}2a,2b,a+b\\
2a+2b,a+b+\frac 12\end{matrix};z\right].
\end{equation}

Another $q$-analogue of Clausen's formula
was delivered by Gasper in \cite{G}.
While it has the advantage that it expresses a square of a
basic hypergeometric series as a  basic hypergeometric
series, it only holds provided the series terminate:
\begin{equation}\label{qClausen}
\left({}_4\phi_3\!\left[\begin{matrix}a,b,aby,ab/y\\
abq^{\frac 12},-abq^{\frac 12},-ab\end{matrix}\,;q,q\right]\right)^2
={}_5\phi_4\!\left[\begin{matrix}a^2,b^2,ab,aby,ab/y\\
a^2b^2,abq^{\frac 12},-abq^{\frac 12},-ab\end{matrix};q,q\right].
\end{equation}
See \cite[Sec.~8.8]{GR} for a nonterminating extension
of \eqref{qClausen} and related identities.

\section*{Acknowledgement}

I would like to thank George Gasper for his interest and for
informing me of the papers \cite{S1,S2} by Srivastava.
I am especially indebted to Slobodan Damjanovi\'c for pointing out that
Theorem~3 was already given by Jain and Srivastava~\cite[Equation~(4.9)]{JS}.

\end{document}